\newtheorem{thm}{Theorem}
\newtheorem{cor}[thm]{Corollary}
\newtheorem{theorem}{Theorem}[section]
\newtheorem{corollary}[theorem]{Corollary}
\newtheorem{proposition}[theorem]{Proposition}
\newtheorem{lemma}[theorem]{Lemma}
\newtheorem{remark}[theorem]{Remark}
\def\irr#1{{\rm Irr}(#1)}
\def\irrr#1#2 {\irr {#1 \mid #2}}
\newcommand{\R}{\mathbb R}
\newcommand{\sfe}{{{\mathbb S}^{n-1}}}
\newcommand{\K}{\mathcal K}
\newcommand{\C}{C^{2,+}(\sfe)}
\newcommand{\Sym}{{\rm Sym}}
\begin{document}

\title[Local log-Minkowski]{A note on the quantitative local version 
\\
of the log-Brunn-Minkowski inequality}
\author[Andrea Colesanti, Galyna Livshyts]{Andrea Colesanti, Galyna Livshyts}
\address{Dipartimento di Matematica e Informatica ``U. Dini",
Universit\`a degli Studi di Firenze}
\email{colesant@math.unifi.it}
\address{School of Mathematics, Georgia Institute of Technology} \email{glivshyts6@math.gatech.edu}

\subjclass[2010]{Primary: 52} 
\keywords{Convex bodies, log-concave, Brunn-Minkowski, Cone-measure}
\date{\today}
\begin{abstract} We prove that the log-Brunn-Minkowski inequality 
\begin{equation*}
|\lambda K+_0 (1-\lambda)L|\geq |K|^{\lambda}|L|^{1-\lambda}
\end{equation*}
(where $|\cdot|$ is the Lebesgue measure and $+_0$ is the so-called log-addition) holds when $K\subset\R^n$ is a ball and $L$ is a symmetric convex body in a suitable $C^2$ neighborhood of $K$.  
\end{abstract}
\maketitle

\section{Introduction}

The classical Brunn-Minkowski inequality (in its multiplicative form) states that for every pair of Borel-measurable sets $K$ and $L,$ and for a scalar 
$\lambda\in[0,1]$, one has
\begin{equation}\label{BM}
|\lambda K+(1-\lambda)L|\geq |K|^{\lambda} |L|^{1-\lambda}
\end{equation}
(in fact, we need to assume that $\lambda K+(1-\lambda)L$ is measurable as well).
See, {\em e.g.}, the extensive survey by Gardner \cite{Gar} on the subject.

Define the geometric average of convex bodies: 
$$
\lambda K+_0 (1-\lambda)L=\{x\in \R^n:\,\langle x,u\rangle \leq h_K^{\lambda}(u) h_L^{1-\lambda}(u),\,\forall u\in\sfe\},
$$
where $h_K$ and $h_L$ are the support functions of $K$ and $L$, respectively.
The log-Brunn-Minkowski conjecture (see Boroczky, Lutwak, Yang, Zhang \cite{BLYZ}) states that 
\begin{equation}\label{lbmc}
|\lambda K+_0 (1-\lambda)L|\geq |K|^{\lambda}|L|^{1-\lambda}
\end{equation}
for every pair of symmetric convex sets $K$ and $L$. 
Important applications and motivations for this conjecture can be found in \cite{BLYZ-1}, \cite{BLYZ-2}. In particular, it is 
equivalent to the famous B-conjecture (see \cite{B-conj}). Note that the straightforward inclusion
$$
\lambda K+_0 (1-\lambda)L\subset \lambda K+(1-\lambda)L
$$ 
tells us that (\ref{lbmc}) is stronger than the classical Brunn-Minkowski inequality (\ref{BM}).

It is not difficult to see that the condition of symmetry in \eqref{lbmc} is necessary. B\"or\"oczky, Lutwak, Yang and Zhang \cite{BLYZ} showed that this 
conjecture holds for $n=2$. Saroglou \cite{christos} proved that the conjecture is true when the 
sets $K$ and $L$ are unconditional ({\em i.e.} they are symmetric with respect to every coordinate hyperplane). Rotem \cite{liran} showed that 
log-Brunn-Minkowski conjecture holds for complex convex bodies. 

Recently, local versions of the log-Brunn-Minkowski inequality have been considered. In the paper \cite{CLM} the authors prove the following 
fact. {\em Let $R>0$ and let $\phi\in C^2(\sfe)$; then there exists $a>0$ such that if the support functions of $K$ and $L$ are 
$Re^{\epsilon_1\phi}$, $Re^{\epsilon_2\phi}$, respectively, with $0\le\epsilon_1,\epsilon_2<a$, then \eqref{lbmc} holds.} 
In this note we improve the previous result; in the next statement, which is our main result, $B_2^n$ denotes the unit ball in $\R^n$.

\begin{theorem}\label{main thm}
Let $R>0$ and $n\geq 2$. There exists $\epsilon(n)$ such that for every symmetric convex $C^2$-smooth body $K$ in $\R^n$ such that 
$\|h-R\|_{C^2(\sfe)}\le\epsilon(n)R$, where $h$ is the support function of $K$, we have
\begin{equation}\label{star}
|\lambda K+_0 (1-\lambda)(R B_2^n)|\geq |K|^{\lambda} |R B_2^n|^{1-\lambda}
\quad\forall\ \lambda\in[0,1].
\end{equation}
Moreover, equality holds if and only if $K$ is a ball centered at the origin.
\end{theorem}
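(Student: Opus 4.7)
The plan is to interpolate between $RB_2^n$ and $K$ along the log-Minkowski family, and reduce the global inequality \eqref{star} to an infinitesimal spectral inequality at the ball plus a perturbation estimate. Set $\psi=\log(h/R)\in C^2(\sfe)$; the hypothesis forces $\|\psi\|_{C^2(\sfe)}\leq C_0\epsilon$ for a dimensional constant $C_0$, and the symmetry of $K$ makes $\psi$ even. For $t\in[0,1]$ let $K_t$ be the convex body with support function $h_t=Re^{t\psi}=h^t R^{1-t}$; the $C^2$-smallness of $\psi$ guarantees $D^2 h_t+h_t I>0$ on $\sfe$, so $K_t\in\C$ is a symmetric convex body with $K_0=RB_2^n$, $K_1=K$, and $K_\lambda=\lambda K+_0(1-\lambda)(RB_2^n)$. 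Since $t\mapsto t\log|K|+(1-t)\log|RB_2^n|$ is affine, \eqref{star} is exactly the concavity on $[0,1]$ of the smooth function $F(t):=\log|K_t|$, so it suffices to show $F''(t)\leq 0$ for every $t\in[0,1]$.

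\textbf{The quadratic form $Q_{K_t}(\psi)$.} Differentiating $|K_t|=\tfrac{1}{n}\int_\sfe h_t\det(D^2h_t+h_tI)\,d\sigma$ twice in $t$ using $\dot h_t=\psi h_t$ and the standard linearization of the determinant, one writes $F''(t)=Q_{K_t}(\psi)$ as an explicit quadratic form in $\psi$ whose coefficients are smooth functions of $h_t$ and its spherical derivatives up to order two. Two structural identities simplify matters: $Q_K(c)=0$ for any constant $c$, because log-adding a constant to the support function is a dilation of $K$, making $\log|K^{(s)}|$ linear in $s$; and the associated bilinear form satisfies $B_K(c,\phi)=0$ for the same reason. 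By polarization, $Q_{K_t}(\psi)=Q_{K_t}(\tilde\psi)$ where $\tilde\psi:=\psi-\bar\psi$ has mean zero over $\sfe$, so we may and will analyse $Q_{K_t}$ only on $\tilde\psi$.

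\textbf{Spectral gap at the ball.} At $t=0$ one has $D^2h_0+h_0I=RI$ and $dS_{RB_2^n}=R^{n-1}d\sigma$, so a short computation (with one integration by parts on $\sfe$) yields
$$
Q_{RB_2^n}(\tilde\psi)=\frac{n}{|\sfe|}\int_\sfe\bigl(n\,\tilde\psi^2-|\nabla_\sfe\tilde\psi|^2\bigr)\,d\sigma.
$$
Evenness of $\tilde\psi$ removes its degree-$1$ spherical harmonic component; together with the mean-zero condition this means the smallest eigenvalue of $-\Delta_\sfe$ that contributes to $\tilde\psi$ is $2n$ (degree-$2$ harmonics), giving the Poincar\'e-type inequality $\int_\sfe|\nabla_\sfe\tilde\psi|^2\,d\sigma\geq 2n\int_\sfe\tilde\psi^2\,d\sigma$ with a dimensional gap. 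Combining the two bounds produces $Q_{RB_2^n}(\tilde\psi)\leq -\delta_n\|\tilde\psi\|_{H^1(\sfe)}^2$ for some $\delta_n>0$, with equality only when $\tilde\psi\equiv 0$.

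\textbf{Perturbation and conclusion.} For every $t\in[0,1]$ one has $\|h_t-R\|_{C^2(\sfe)}\leq 2C_0\epsilon R$, so the coefficients of $Q_{K_t}$ (smooth functions of $h_t$, $\nabla_\sfe h_t$ and $D^2h_t$) differ from those of $Q_{RB_2^n}$ by $O(\epsilon)$ in $C^0(\sfe)$, giving
$$
Q_{K_t}(\tilde\psi)\leq Q_{RB_2^n}(\tilde\psi)+C_1\epsilon\|\tilde\psi\|_{H^1(\sfe)}^2\leq (-\delta_n+C_1\epsilon)\|\tilde\psi\|_{H^1(\sfe)}^2,
$$
which is non-positive once $\epsilon\leq\epsilon(n):=\delta_n/(2C_1)$. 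Hence $F''(t)\leq 0$ on $[0,1]$, proving \eqref{star}. For the equality case, $F$ concave with $F(0)=F(1)=0$ and $F(\lambda)=0$ for some $\lambda\in(0,1)$ forces $F\equiv 0$, so $F''(0)=Q_{RB_2^n}(\tilde\psi)=0$; the strict spectral gap then forces $\tilde\psi\equiv 0$, so $\psi$ is constant and $K$ is a Euclidean ball centered at the origin. The main obstacle is the explicit derivation of $Q_{K_t}$ and the clean $O(\epsilon)$ perturbation estimate on its coefficients: since the spectral bound at the ball is already tight on constants, the coefficient-level perturbation must be absorbed solely by the residual $\|\tilde\psi\|_{H^1}^2$ term coming from the degree-$\geq 2$ Poincar\'e gap.
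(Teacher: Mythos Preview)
Your proof is correct and follows essentially the same route as the paper: parametrize by $h_t=Re^{t\psi}$, reduce \eqref{star} to the log-concavity of $t\mapsto|K_t|$, compute $(\log|K_t|)''$ at the ball via the Poincar\'e inequality \eqref{spherical harmonics} for even mean-zero functions, reduce to the mean-zero case by dilation-invariance, and perturb to all $t$ using the $C^2$-smallness of $\psi$. The only difference is in the perturbation step: the paper controls $(\log f)''(s)-(\log f)''(0)$ through an explicit bound on the third derivative $(\log f)'''$ (Lemma~\ref{estimate of third derivative}, via the integration-by-parts formulas \eqref{ibp1}--\eqref{ibp2}), while you compare the coefficients of the quadratic form $Q_{K_t}$ directly to those of $Q_{RB_2^n}$---these two perturbation arguments are equivalent, and the paper's computations in Lemma~\ref{estimate of third derivative} are precisely what is needed to make your ``$O(\epsilon)$ in $C^0$'' claim rigorous.
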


In other words, Theorem \ref{main thm} asserts that if $K$ is a ball, there exists a $C^2$ neighborhood $\mathcal{U}$ of $K$, 
whose radius depends on the dimension $n$ only, such that \eqref{lbmc} holds for $K$ and for every $L\in\mathcal{U}$.

We remark, that our result is contained in the recent paper by Kolesnikov and Milman \cite{KolMilsupernew} (see  Theorem 1.2), which was unknown to us as of this writing.

\medskip

The cone volume measure of a convex set $K$ is the measure on the sphere, defined as
$$c_K(\Omega)=\frac{1}{n}\int_{\Omega} h_K(u) d s_K(u),$$
where $h_K$ is the support function of $K.$
It was conjectured by Lutwak \cite{lutwak} that the cone volume measure determines a symmetric convex body uniquely. As a corollary of our main result, we deduce a local uniqueness result:
\begin{cor}\label{maincor}
Let $n\geq 2$ and let $R>0$ be a constant. There exists $\epsilon=\epsilon(n)>0$, which depends only on the dimension, such that, given a symmetric $C^2$-smooth convex body $K$ satisfying $\|R-h_K\|_{C^2(\sfe)}\le\epsilon(n) R$, and $dc_K(u)=R^n du,$ one has that $K$ coincides with the Euclidean ball of radius $R$.
\end{cor}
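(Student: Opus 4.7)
The plan is to derive the corollary from Theorem \ref{main thm} by a first-variation argument along the log-Minkowski path, combined with a monotonicity step that upgrades a derivative equality to equality of bodies. Set $L := R B_2^n$. A preliminary observation is that the cone-volume hypothesis already fixes the volume of $K$: for any convex body $M$ one has $c_M(\sfe) = \frac{1}{n}\int_{\sfe} h_M \, ds_M = |M|$, so $c_K = c_L$ gives $|K| = |L|$. Define $F(\lambda) := |\lambda K +_0 (1-\lambda) L|$ on $[0,1]$. Theorem \ref{main thm} then yields $F(\lambda) \geq |K|^{\lambda}|L|^{1-\lambda} = |L|$, with equality at $\lambda = 0, 1$. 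For $K$ in a sufficiently small $C^2$-neighborhood of $L$, the function $h_K^\lambda R^{1-\lambda}$ is itself the support function of $\lambda K +_0 (1-\lambda) L$, so the standard first-variation formula for volumes of Wulff shapes gives
\[
F'(0^+) = n \int_{\sfe} \log \frac{h_K}{R} \, dc_L, \qquad F'(1^-) = n \int_{\sfe} \log \frac{h_K}{R} \, dc_K.
\]
The hypothesis $c_K = c_L$ makes the two integrals identical, while $F \ge F(0) = F(1)$ forces $F'(0^+) \ge 0 \ge F'(1^-)$; hence $F'(0^+) = F'(1^-) = 0$.

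To upgrade this vanishing derivative to $K = L$, I would apply Theorem \ref{main thm} along the interior of the log-Minkowski path. For each $\lambda_0 \in (0, 1]$, the body $K_{\lambda_0} := \lambda_0 K +_0 (1-\lambda_0) L$ satisfies $\|h_{K_{\lambda_0}} - R\|_{C^2(\sfe)} \le C_n \lambda_0 \|h_K - R\|_{C^2(\sfe)}$ by a direct Taylor estimate on $R(h_K/R)^{\lambda_0} - R$, so after shrinking $\epsilon(n)$ if necessary $K_{\lambda_0}$ itself satisfies the hypotheses of Theorem \ref{main thm}. Applying Theorem \ref{main thm} to the pair $(K_{\lambda_0}, L)$, together with the identity $\mu K_{\lambda_0} +_0 (1-\mu) L = K_{\mu\lambda_0}$ (immediate on support functions, since $(h_K^{\lambda_0}R^{1-\lambda_0})^\mu R^{1-\mu} = h_K^{\mu\lambda_0}R^{1-\mu\lambda_0}$), one obtains
\[
F(\mu\lambda_0) \ge F(\lambda_0)^{\mu} F(0)^{1-\mu}, \qquad \mu \in [0, 1].
\]
This is equivalent to $\phi(\tau) := \tau^{-1}(\log F(\tau) - \log F(0))$ being nonincreasing on $(0, 1]$. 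Since $\phi(0^+) = F'(0^+)/F(0) = 0$, we get $\phi \le 0$ on $(0,1]$, i.e. $F \le |L|$; combined with the opposite inequality $F \ge |L|$, we conclude $F \equiv |L|$ on $[0, 1]$. The equality clause in Theorem \ref{main thm} then forces $K$ to be a ball centred at the origin, and $|K| = |L|$ pins it to $K = R B_2^n$.

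The main obstacle is the quantitative $C^2$-perturbation work underpinning the second paragraph: that for $K$ in the neighborhood provided by Theorem \ref{main thm}, the product $h_K^{\lambda_0} R^{1-\lambda_0}$ is actually the support function of $K_{\lambda_0}$ (rather than merely an upper bound on it), and that $K_{\lambda_0}$ itself remains inside the $C^2$-neighborhood to which Theorem \ref{main thm} applies. Both should follow from standard estimates for $C^2$-perturbations of the constant function $R$ on the sphere (using that $(1+t)^{\lambda_0}$ is smooth in $t$ near $0$ uniformly in $\lambda_0 \in [0,1]$), but the constants must be calibrated carefully so that $\epsilon(n)$ absorbs the losses incurred in the Taylor estimate.
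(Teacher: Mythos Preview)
Your argument is correct and follows the same scheme as the paper (which in turn follows \cite{BLYZ}): compute the first variation of $F(\lambda)=|\lambda K+_0(1-\lambda)L|$ at the two endpoints, use $c_K=c_L$ to identify the resulting integrals, and close a chain of inequalities to force equality. The paper is terser at the last step, simply invoking the log-Minkowski inequality and its equality case from \cite{BLYZ}, whereas you spell out the upgrade from $F'(0^+)=F'(1^-)=0$ to $F\equiv|L|$ via an explicit monotonicity argument that re-applies Theorem~\ref{main thm} to the intermediate bodies $K_{\lambda_0}$. This makes your version more self-contained at the cost of the $C^2$-perturbation bookkeeping you flag; note, however, that you can bypass your entire second paragraph by invoking Lemma~\ref{lemma concavity}, which already shows that $\log F$ is concave on $[0,1]$: then $(\log F)'(0)=(\log F)'(1)$ forces $(\log F)'$ to be constant, so $F$ is constant, and the strict-concavity clause immediately yields that $K$ is a ball.
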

The previous corollary can be also deduced by the more general results proved by Firey in \cite{Firey} (see in particular Theorem 3 therein). However, we believe that our methods present separate interest.
\medskip

\noindent
\textbf{Aknowledgement.} The authors would like to thank Alina Stancu and Mohammad Ivaki for precious conversations on the subject of this paper. 
The first author is supported
by the G.N.A.M.P.A., project {\em Problemi sovradeterminati e questioni di ottimizzazione di forma}.
The second author is supported by the NSF CAREER DMS-1753260. The work was partially supported by the National Science Foundation under Grant No. DMS-1440140 while the second author was in residence at the Mathematical Sciences Research Institute in Berkeley, California, during the Fall 2017 semester.

\section{Preliminaries}

We work in the Euclidean $n$-dimensional space $\R^n$. The unit ball shall be denoted by $B_2^n$ and the unit sphere by $\sfe$. 
The Lebesgue volume of a measurable set $A\subset \R^n$ is denoted by $|A|$.

We say that a convex body $K$ is of class $C^{2,+}$ if $\partial K$ is of class $C^2$ and the Gauss curvature is strictly positive at 
every $x\in\partial K$. 
In particular, if $K$ is $C^{2,+}$ then it admits unique outer unit normal $\nu_K(x)$ at every boundary point $x$. Recall that the Gauss map $\nu_K\,:\,\partial K\to\sfe$ is the map assigning the collection of unit normals to each point of $\partial K.$ 

We recall that an orthonormal frame on the sphere is a map which associates to every $x\in\sfe$ an orthonormal basis of the tangent space
to $\sfe$ at $x$.
Let $\psi\in C^2(\sfe)$; we denote by $\psi_i(u)$ and $\psi_{ij}(u)$, $i,j\in\{1,\dots,n-1\}$, 
the first and second covariant derivatives of $\psi$ at $u\in\sfe$, with respect to a fixed local orthonormal frame on an 
open subset of $\sfe$. We define the matrix
\begin{equation}\label{curvature_matrix}
Q(\psi;u)=(q_{ij})_{i,j=1,\dots,n-1}=\left(
\psi_{ij}(u)+\psi(u)\delta_{ij}
\right)_{i,j=1,\dots,n-1},
\end{equation}
where the $\delta_{ij}$'s are the usual Kronecker symbols. On an occasion, instead of $Q(\psi;u)$ we write $Q(\psi)$. Note that $Q(\psi;u)$ is symmetric by standard properties of covariant derivatives. In what follows we shall often consider $\psi$ to be a support function of a convex body $K$. In this case $Q(\psi)$ is called {\em curvature matrix} of $K$; this name comes from the fact that $\det(Q(\psi))$ is the density of the curvature measure $s_K$, and therefore, 
$$
|K|=\frac{1}{n}\int_{\sfe} h_K(u) \det Q(h_K,u) du.
$$
(See, for instance, Koldobsky \cite{Kold} for the proof.) We recall here a fact that will be frequently used in the paper 
(a proof can be deduced, for instance, from \cite[Section 2.5]{book4}).

\begin{proposition}\label{added} 
Let $K\in\K^n$ and let $h$ be its support function. Then $K$ is of class $C^{2,+}$ if and only if
$h\in C^2(\sfe)$ and
$$
Q(h;u)>0,\quad\forall\ u\in\sfe.
$$ 
\end{proposition}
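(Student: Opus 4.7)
The plan is to prove both directions by linking $Q(h;u)$ to the differential of the inverse Gauss map $F=\nu_K^{-1}:\sfe\to\partial K$, via the standard representation
$$
F(u)=h(u)u+\nabla_{\sfe}h(u),
$$
where $h$ has been extended to $\R^n\setminus\{0\}$ by $1$-homogeneity and $\nabla_{\sfe}$ is the spherical gradient. The core computation, carried out in a local orthonormal frame $e_1,\dots,e_{n-1}$ on an open patch $U\subset\sfe$, is the identity
$$
dF(u)[e_i]=\sum_{j=1}^{n-1}\bigl(h_{ij}(u)+h(u)\delta_{ij}\bigr)\,e_j=\sum_{j=1}^{n-1}q_{ij}(u)\,e_j,
$$
which uses only the ambient formula $\nabla_{e_i}e_j=-\delta_{ij}u$ for a local orthonormal frame on $\sfe\subset\R^n$ and the $1$-homogeneity of $h$. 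Thus $Q(h;u)$ represents the tangential differential of $F$, and both implications will follow once this identification is in place.

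\emph{Necessity.} Assume $K\in C^{2,+}$. Then the Gauss map $\nu_K:\partial K\to\sfe$ is $C^1$ with nonsingular differential (its determinant is the Gauss curvature, strictly positive by hypothesis); by the inverse function theorem, together with the global injectivity of $\nu_K$ on a strictly convex $C^{2,+}$ body, $\nu_K$ is a $C^1$-diffeomorphism and $F=\nu_K^{-1}$ is $C^1$. From $h(u)=\langle F(u),u\rangle$ and the orthogonality $\langle F(u)-h(u)u,u\rangle=0$ one reads off that indeed $F(u)=h(u)u+\nabla_{\sfe}h(u)$, and that the tangential components of $F$ give the covariant derivatives of $h$, yielding $h\in C^2(\sfe)$. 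The core identity then exhibits $Q(h;u)$ as the differential of $\nu_K^{-1}$ in tangential coordinates, i.e.\ the inverse of the Weingarten map at $F(u)$; therefore $Q(h;u)>0$.

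\emph{Sufficiency.} Assume $h\in C^2(\sfe)$ and $Q(h;u)>0$ for every $u\in\sfe$. Extend $h$ $1$-homogeneously to $\R^n\setminus\{0\}$; a direct computation shows that the Hessian $D^2 h$ of this extension, restricted to $T_u\sfe$, is represented by $Q(h;u)$ in the chosen frame, while the radial direction lies in its kernel by Euler's identity. Hence $D^2 h\succeq 0$ on $\R^n\setminus\{0\}$, so $h$ is convex on $\R^n$ and therefore is the support function of a unique convex body $K$. Setting $F(u):=h(u)u+\nabla_{\sfe}h(u)$, the core identity says that $dF$ is represented by $Q(h;u)$, hence is invertible with strictly positive determinant, so $F$ is a local $C^1$-immersion of $\sfe$ into $\R^n$. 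A standard argument (using $\langle F(u),u\rangle=h(u)$, the support inequality $\langle F(u),v\rangle\leq h(v)$ together with strict convexity of $h$ granted by $Q(h)>0$, and compactness of $\sfe$) identifies $F(\sfe)$ with $\partial K$ and shows $F$ is a bijection. Thus $F:\sfe\to\partial K$ is a $C^1$-diffeomorphism, which gives $\partial K\in C^2$, and the relation $\nu_K\circ F=\mathrm{id}_{\sfe}$ combined with $dF=Q(h)>0$ forces the Gauss curvature at each $F(u)$ to equal $\det Q(h;u)^{-1}>0$, so $K\in C^{2,+}$.

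The main obstacle is the global bijectivity statement in the sufficiency part: the formula for $dF$ only provides a local immersion, and one has to combine it with convexity of $h$ and compactness of $\sfe$ to conclude that $F$ wraps $\sfe$ onto $\partial K$ exactly once. The remaining steps are routine differential-geometric computations in the frame.
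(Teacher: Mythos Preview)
The paper does not actually prove this proposition: it records it as a standard fact and defers to \cite[Section 2.5]{book4} for the argument. Your proposal supplies precisely that standard proof, identifying $Q(h;u)$ with the tangential differential of the map $F(u)=h(u)u+\nabla_{\sfe}h(u)$ (equivalently, of the gradient of the $1$-homogeneous extension of $h$) and reading off both implications from the fact that $F$ is the inverse Gauss map. So there is nothing to compare; you have written out what the paper leaves to the reference, and the approach is the canonical one.

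Two small points you may want to tighten. First, the formula $\nabla_{e_i}e_j=-\delta_{ij}u$ for the ambient derivative is only valid at the centre of a geodesic frame; in general there are tangential Christoffel terms. Your computation is still correct because the paper's $h_{ij}$ are covariant derivatives, which absorb exactly those terms, but it is worth saying so. Second, in the sufficiency part the global bijectivity of $F$ onto $\partial K$ need not be treated as an obstacle: once you know $h$ is the support function of $K$, the identity $F(u)=\nabla h(u)$ shows that $F(u)$ is the (unique, by strict convexity) contact point of the supporting hyperplane with outer normal $u$, so $F$ is automatically a bijection from $\sfe$ onto $\partial K$; combined with the invertibility of $dF=Q(h)$ this gives the $C^1$-diffeomorphism directly.
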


In view of the previous result, we say that a function $h$ defined on $\sfe$ is of class $C^{2,+}(\sfe)$ if $h\in\C^(\sfe)$ and 
$Q(h;u)$ is positive definite for every $x\in\sfe$. For $g\in C^2(\sfe)$, we set
$$
\| g\|_{C^2(\sfe)}=
\|g\|_{L^\infty(\sfe)}+\|\nabla_s g\|_{L^\infty(\sfe)}+\sum_{i,j}^{n-1} \|g_{ij}\|_{L^\infty(\sfe)},
$$
where $\nabla_s g$ denotes the spherical gradient of $g$ ({\em i.e.} the vector having first covariant derivatives as components). 
We also set
$$
\|g\|_{L^2(\sfe)}^2=\int_{\sfe} g^2(u)du,\quad
\|\nabla_s g\|^2_{L^2(\sfe)}=\int_{\sfe}\|\nabla_s g(u)\|^2du.
$$

\subsection{Co-factor matrices}
For a natural number $N$, denote by $\Sym(N)$ the space of $N\times N$ symmetric matrices. Given $A\in\Sym(N)$
we denote by $a_{jk}$ its $jk$-th entry and write $A=(a_{jk})$. For $j,k=1,\dots,N$ we set
\begin{equation}\label{c_ij}
c_{jk}(A)=\frac{\partial\det}{\partial a_{jk}}(A).
\end{equation}
The matrix $(c_{jk}(A))$ is called the co-factor matrix of $A$. We also set, for $j,k,r,s=1\dots,N$,
\begin{equation}\label{c_ijkl}
c_{jk,rs}(A)=\frac{\partial^2\det}{\partial a_{jk}\partial a_{rs}}(A).
\end{equation}

Recall that
\begin{equation}\label{determinant}
\det(A)=\frac{1}{N!}\sum{\delta\binom{j_1,\dots,j_{N}}{k_1,\dots,k_{N}}a_{j_1
k_1}\cdots a_{j_{N} k_{N}}},
\end{equation}
where the sum is taken over all possible indices $j_s, k_s \in
\{1,\dots,N\}$ (for $s=1,\dots,N$) and the Kronecker symbol
$$
\delta\binom{j_1,\dots,j_{N}}{k_1,\dots,k_{N}}
$$ 
equals $1$ (respectively, $-1$) when $j_1,\dots,j_{N}$ are distinct and
$(k_1,\dots,k_{N})$ is an even (respectively, odd) permutation
of $(j_1,\dots,j_{N})$; otherwise it is $0$. Using (\ref{determinant}), along with (\ref{c_ij}) and (\ref{c_ijkl}), we derive for every $j,k,r,s\in\{1,\dots,N\}$:
\begin{eqnarray}\label{refer}
c_{jk}(A)&=&\frac{1}{(N-1)!}\sum{\delta\binom{j,j_1,\dots,j_{N-1}}{k,j_1,\dots,k_{N-1}}a_{j_1
k_1}\cdots a_{j_{N-1} k_{N-1}}}\,,\nonumber\\
\nonumber\\
c_{jk,rs}(A)&=&\frac{1}{(N-2)!}\sum{\delta\binom{r,j,j_1,\dots,j_{N-2}}{s,k,k_1,\dots,k_{N-2}}a_{j_1
k_1}\cdots a_{j_{N-2} k_{N-2}}}\,.
\end{eqnarray}

\begin{remark}\label{remark matrices 1} If $A\in\Sym(N)$ is invertible, then, by (\ref{refer}),
$$
(c_{jk}(A))=\det(A)\ A^{-1}.
$$ 
In particular, if $A=I_N$ (the identity matrix of order $N$), then $(c_{jk})(I_N)=I_N.$ 
\end{remark}

\begin{remark}\label{remark matrices 2} Observe that, by (\ref{refer}), for every $A=(a_{jk})\in\Sym(N)$,
$$
\sum_{j,k=1}^N c_{jk}(A)a_{jk}=N\det(A).
$$

\end{remark}

\begin{remark} Let $A=(a_{ij})\in\Sym(N)$ and let $M>0$ be such that
$$
|a_{jk}|\le C,\quad\forall\, j,k=1\dots,M.
$$
Then there exists some constant $c=c(N)$ ({\em i.e.} depending only on $N$) such that, for every $j,k,r,s=1,\dots, N$,
$$
|c_{jk}(A)|\le c(N)\ M^{N-1},\quad
|c_{jk,rs}(A)|\le c(N)\ M^{N-2}.
$$

\end{remark}
Note that if $g\equiv c$ on $\sfe$ then $Q(g;u)=cI_{n-1}$ for every $u\in\sfe$.

\subsection{The Cheng-Yau lemma and an extension}

Let $h\in C^3(\sfe)$. Consider the co-factor matrix $y\to C[Q(h;y)]$. This is a matrix of functions on $\sfe$. The lemma of Cheng and Yau (\cite{Cheng-Yau}) asserts
that each column of this matrix is divergence-free.

\begin{lemma}[Cheng-Yau] Let $h\in C^3(\sfe)$. Then, for every index $j\in\{1,\dots,n-1\}$ and for every $y\in\sfe$,
$$
\sum_{i=1}^{n-1}\left(
c_{ij}[Q(h;y)]
\right)_i=0,
$$
where the sub-script $i$ denotes the derivative with respect to the $i$-th element of an orthonormal frame on $\sfe$. 
\end{lemma}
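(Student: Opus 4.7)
The plan is to expand the divergence via the chain rule for the determinant, convert everything into covariant derivatives of $h$ on $\sfe$, and then exploit two independent cancellations: one produced by the Ricci identity on the sphere, and one coming purely from the combinatorial antisymmetry of the second cofactor $c_{jk,rs}$ in formula (\ref{refer}).

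First, since $\det(\cdot)$ is a polynomial in the matrix entries, for each fixed $i,j$ the chain rule yields
\[
(c_{ij}[Q(h;y)])_i \;=\; \sum_{r,s=1}^{n-1} c_{ij,rs}[Q(h;y)]\cdot(Q_{rs})_i,
\]
where $(Q_{rs})_i = h_{rsi} + h_i\,\delta_{rs}$ because $Q_{rs} = h_{rs} + h\,\delta_{rs}$ and the metric (hence $\delta_{rs}$) is covariantly constant. Summing over $i$ reduces the lemma to showing
\[
\sum_{i,r,s} c_{ij,rs}[Q]\bigl(h_{rsi}+h_i\,\delta_{rs}\bigr)=0.
\]

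Next, I would bring in the Ricci identity on $\sfe$. Since $\sfe$ has constant sectional curvature $1$, a standard computation gives, for any $h\in C^3(\sfe)$, the permutation rule
\[
h_{rsi} - h_{ris} \;=\; \delta_{ri}\, h_s - \delta_{rs}\, h_i.
\]
Substituting $h_{rsi}=h_{ris}+\delta_{ri}h_s-\delta_{rs}h_i$ into the sum cancels the $h_i\delta_{rs}$ term exactly, leaving the two residual sums
\[
\sum_{i,r,s} c_{ij,rs}[Q]\,h_{ris}\;+\;\sum_{i,s} c_{ij,is}[Q]\,h_s.
\]

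The final step is to observe that both of these residual sums vanish for purely algebraic reasons, reading off from formula (\ref{refer}). For the first sum: the Kronecker symbol $\delta\binom{r,i,\dots}{s,j,\dots}$ is antisymmetric under exchange of its first two top indices, so $c_{ij,rs}=-c_{rj,is}$; meanwhile $h_{ris}$ is symmetric in $r\leftrightarrow i$ by symmetry of the Hessian, so contracting the antisymmetric quantity with the symmetric one gives zero. For the second sum: whenever $r=i$, the symbol $\delta\binom{i,i,\dots}{s,j,\dots}$ has two equal top indices and vanishes, so $c_{ij,is}[Q]=0$ term-by-term for every fixed $i$. The hardest part I anticipate is simply tracking sign conventions in the Ricci identity and the index positions in $c_{jk,rs}$; a useful sanity check is the low-dimensional case $n=3$ (where $Q$ is $2\times 2$), in which the single non-trivial instance $h_{221}-h_{212}=-h_1$ produces exactly the cancellation required.
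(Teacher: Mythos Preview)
Your proof is correct. The paper does not actually prove this lemma; it simply attributes it to Cheng and Yau \cite{Cheng-Yau} and moves on to the integration-by-parts consequence \eqref{ibp1}. What you have written is essentially the standard argument: differentiate through the determinant via the second cofactors $c_{ij,rs}$, then use the constant-curvature Ricci identity $h_{rsi}-h_{ris}=\delta_{ri}h_s-\delta_{rs}h_i$ to trade $h_{rsi}+h_i\delta_{rs}$ for $h_{ris}+\delta_{ri}h_s$, and finally kill the two residual sums by the antisymmetry $c_{ij,rs}=-c_{rj,is}$ (read off from the alternating Kronecker symbol in \eqref{refer}) against the symmetry $h_{ris}=h_{irs}$ of the Hessian, together with $c_{ij,is}=0$ from the repeated top index. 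All of these steps check out, including the sign in the Ricci identity (with $R_{abcd}=\delta_{ac}\delta_{bd}-\delta_{ad}\delta_{bc}$ on $\sfe$ one indeed gets $[\nabla_i,\nabla_s]h_r=\delta_{ri}h_s-\delta_{rs}h_i$), and your $n=3$ sanity check is consistent. So there is nothing to compare against here beyond noting that you have supplied a full proof where the paper only gives a reference.
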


For simplicity of notation we shall often write $C(h)$, $c_{ij}(h)$ and $c_{ij,kl}(h)$ in place of $C[Q(h)]$, $c_{ij}[Q(h)]$ and $c_{ij,kl}[Q(h)]$ respectively. 
As a corollary of the previous result we have the following integration by parts formula. 
If $h,\psi,\phi\in C^2(\sfe)$, then
\begin{equation}\label{ibp1}
\int_\sfe \phi\,c_{ij}(h)(\psi_{ij}+\psi\,\delta_{ij})dy
=\int_\sfe \psi\,c_{ij}(h)(\phi_{ij}+\phi\,\delta_{ij})dy.
\end{equation}

The Lemma of Cheng and Yau admits the following extension (see Lemma 2.3 in \cite{Colesanti-Hug-Saorin2}). 
Note that we adopt the summation convention over repeated indices. 

\begin{lemma} Let $h,\psi\in C^3(\sfe)$. Then, for every $k\in\{1,\dots,n-1\}$ and for every $y\in\sfe$
$$
\sum_{l=1}^{n-1}\left(
c_{ij,kl}[Q(h;y)](\psi_{ij}+\psi\delta_{ij})
\right)_l=0.
$$
\end{lemma}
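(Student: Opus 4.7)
The natural approach is to view this identity as the linearization of the Cheng--Yau lemma in the direction $\psi$ around $h$. Since the map $\zeta\mapsto Q(\zeta;y)$ is linear in $\zeta$, for every $t\in\R$ one has
$$
Q(h+t\psi;y)=Q(h;y)+t\,Q(\psi;y),
$$
and $h+t\psi\in C^3(\sfe)$. Applying the Cheng--Yau lemma to $h+t\psi$ therefore yields, for every $j\in\{1,\dots,n-1\}$, every $y\in\sfe$, and every $t\in\R$,
$$
\sum_{i=1}^{n-1}\bigl(c_{ij}[Q(h+t\psi;y)]\bigr)_i=0.
$$

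The second step is to differentiate this identity in $t$ at $t=0$. Since $c_{ij}$ is polynomial in the entries of its matrix argument, $t\mapsto c_{ij}[Q(h+t\psi;y)]$ is polynomial in $t$ with $C^1$ dependence in $y$, so the $t$-derivative commutes with the covariant derivative $(\cdot)_i$. The chain rule for cofactors gives
$$
\frac{d}{dt}\,c_{ij}\bigl[Q(h+t\psi;y)\bigr]\bigg|_{t=0}
=c_{ij,rs}[Q(h;y)]\bigl(\psi_{rs}(y)+\psi(y)\,\delta_{rs}\bigr),
$$
with the summation convention in force on $r,s$, because the $(r,s)$-entry of $Q(\psi;y)$ is exactly $\psi_{rs}+\psi\,\delta_{rs}$. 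Substituting back produces
$$
\sum_{i=1}^{n-1}\Bigl(c_{ij,rs}[Q(h;y)]\,(\psi_{rs}+\psi\,\delta_{rs})\Bigr)_i=0\qquad\text{for every }j\in\{1,\dots,n-1\}.
$$

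To conclude, I would match this with the stated form by relabeling and using the symmetries of the second cofactors. The expansion \eqref{refer}, combined with $Q(h;y)\in\Sym(n-1)$, gives $c_{jk,rs}(A)=c_{kj,rs}(A)$, while the manifest $\{j,k\}\leftrightarrow\{r,s\}$ symmetry in \eqref{refer} yields $c_{jk,rs}(A)=c_{rs,jk}(A)$. Renaming dummies via $(i,j,r,s)\mapsto(l,k,i,j)$ and applying these symmetries, the previous identity becomes exactly
$$
\sum_{l=1}^{n-1}\bigl(c_{ij,kl}[Q(h;y)]\,(\psi_{ij}+\psi\,\delta_{ij})\bigr)_l=0
$$
for each $k\in\{1,\dots,n-1\}$, which is the claim. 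The conceptual content is contained entirely in the reduction to Cheng--Yau; the only mildly delicate point is justifying the interchange of $d/dt$ with the covariant derivative on $\sfe$, which is unproblematic given the polynomial dependence of $c_{ij}$ on $Q$ and the $C^3$ hypothesis on $h$ and $\psi$. The remaining work is index bookkeeping for the second-cofactor symmetries, which is routine but must be tracked carefully in order to align the free and summed indices with the stated form.
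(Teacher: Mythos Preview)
The paper does not actually prove this lemma; it merely cites Lemma~2.3 of \cite{Colesanti-Hug-Saorin2}. Your linearization strategy---apply Cheng--Yau to $h+t\psi$ and differentiate at $t=0$---is the natural proof and is essentially correct.

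There is, however, one slip in the index bookkeeping. The identity $c_{jk,rs}(A)=c_{kj,rs}(A)$ does \emph{not} hold for symmetric $A$: already for $N=2$ one computes $c_{12,21}=-1$ while $c_{21,21}=0$. What \emph{is} true for $A\in\Sym(N)$ is the simultaneous swap $c_{jk,rs}(A)=c_{kj,sr}(A)$, in addition to the pair interchange $c_{jk,rs}(A)=c_{rs,jk}(A)$ that you correctly noted. After your renaming $(i,j,r,s)\mapsto(l,k,i,j)$ you obtain $c_{lk,ij}=c_{ij,lk}=c_{ji,kl}$; since this is contracted against $q_{ij}(\psi)=\psi_{ij}+\psi\,\delta_{ij}$, which is symmetric in $i,j$, the sum over $i,j$ agrees with $\sum_{i,j}c_{ij,kl}\,q_{ij}(\psi)$ and the conclusion follows. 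Alternatively, you can avoid the issue entirely by first observing that $c_{ij}(A)=c_{ji}(A)$ for symmetric $A$, so Cheng--Yau equally reads $\sum_{j}(c_{ij}[Q(h)])_j=0$ for each $i$; linearizing this version and renaming yields $c_{kl,ij}$, and then the pair interchange alone gives $c_{ij,kl}$.
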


Correspondingly we have, for every $h,\psi,\varphi,\phi\in C^2(\sfe)$,
\begin{eqnarray}\label{ibp2}
&&\int_\sfe \psi\,c_{ij,kl}(h)(\varphi_{ij}+\varphi\delta_{ij})((\phi)_{kl}+\phi\,\delta_{kl})dy\nonumber\\
&&=\int_\sfe \phi\,c_{ij,kl}(h)(\varphi_{ij}+\varphi\delta_{ij})((\psi)_{kl}+\psi\,\delta_{kl})dy.
\end{eqnarray}

\subsection{A Poincar\'e inequality for even functions on the sphere}

Here we use some basic facts from the theory of spherical harmonics,
which can be found, for instance in \cite{Groemer,Kold} or in \cite[Appendix]{book4}. We denote by $\Delta_\sigma$ the spherical Laplace operator (or Laplace-Beltrami operator), on $\sfe$. 
The first eigenvalue of $\Delta_\sigma$ is $0$, and the corresponding eigenspace is formed by constant functions. The second eigenvalue of $\Delta_\sigma$ is $n-1$, and the corresponding eigenspace is formed by the restrictions of linear functions of $\R^n$ to $\sfe$. The third eigenvalue is $2n$, which implies, in particular, that for any \textbf{even} function $\psi\in C^2(\sfe)$ such that
$$
\int_{\sfe}\psi du=0,
$$
one has
\begin{equation}\label{spherical harmonics}
\int_{\sfe}\psi^2du\le\frac 1{2n}\int_{\sfe}|\nabla_s\psi|^2du.
\end{equation}

\section{Computations of derivatives}
Let $\psi\in C^2(\sfe),$ and let $s>0$. We consider the function $h_s(u)=e^{s\psi(u)}$. We will denote derivatives with respect to the parameter $s$ by a dot, {\em e.g.}:
$$
\dot{h}_s(u)=\frac{d}{ds}h(u),\quad\ddot{h}_s(u)=\frac{d^2}{ds^2}h(u),\ \dots
$$
Note that
\begin{equation}\label{dots}
\dot h_s=\psi h_s,\quad\ddot h_s=\psi^2 h_s,\quad \dddot h_s=\psi^3 h_s.
\end{equation}

\begin{remark}\label{rem_change} 
As we may interchange the order of derivatives, for every $j,k=1,\dots,n-1$ we have
$$
q_{jk}(\dot{h})=\dot{q}_{jk}(h),
$$
and thus
$$
\dot{Q}(h)=Q(\dot{h}). 
$$
Similar equalities hold for successive derivatives in $s$.
\end{remark}
Consider the volume function
\begin{equation}\label{the function}
f(s)=\frac{1}{n}\int_\sfe h_s(u)\det(Q(h_s; u))du.
\end{equation}
If $h_s$ is the support function of a convex body $K_s$ (as it will be in the sequel), $f$ represents the volume of $K_s$.  

\begin{remark}\label{remark large neighborhood}
The entries of $Q(h_s)$ are continuous functions of the second derivatives of $h_s$ and 
$Q(h_0)>0$. Hence there exists $\eta_0>0$ such that if $\psi\in C^2(\sfe)$ is such that
$\|\psi\|_{C^2(\sfe)}\le\eta_0$, then 
\begin{equation}\label{large neighborhood}
Q(e^{s\psi};u)>0\quad\forall\ u\in\sfe,\ \forall\ s\in[-2,2].
\end{equation}
We shall use notation
$$
\mathcal U=\{\psi\in C^2(\sfe)\colon \|\psi\|_{C^2(\sfe)}\le\eta_0\}.
$$
Note that if $\psi\in{\mathcal U}$ then $f>0$ in $[-2,2]$. Moreover, in the case $h_0\equiv 1$ we have $Q(h_0)=I_{n-1}$, and
\begin{equation}\label{function in zero}
f(0)=\frac1n|\sfe|.
\end{equation}
\end{remark}

\begin{lemma}\label{lemma derivatives 1} In the notations introduced above, we have, for every $s$:
\begin{equation}\label{first derivative}
f'(s)=\int_{\sfe} \psi h_s\det(Q(h_s))du;
\end{equation}
\begin{equation}\label{second derivative}
f''(s)=\int_\sfe\left[\psi^2{h}_s\det(Q(h_s))+\psi h_sc_{jk}(h_s) q_{jk}(\psi h_s)\right]du;
\end{equation}
\begin{eqnarray}\label{third derivative}
f'''(s)&=&\int_{\sfe}h_s\left[\psi^3\det(Q(h_s))+2\psi^2c_{jk}(h_s)q_{jk}(\psi h_s)\right]du+\\
&+&\int_\sfe h_s \left\{\psi \left[c_{jk,rs}(h_s)q_{jk}(\psi h_s)q_{rs}(\psi h_s)+c_{jk}(h_s)q_{jk}(\psi^2 h_s)\right]\right\}du.
\nonumber
\end{eqnarray}

\end{lemma}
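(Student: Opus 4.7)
The plan is to differentiate $f(s) = \frac{1}{n}\int_\sfe h_s \det Q(h_s)\, du$ under the integral sign three times, using $\dot h_s = \psi h_s$ together with the standard chain rule for the determinant and its cofactors; the only nontrivial reduction is needed for $f'(s)$, where an integration by parts (Cheng--Yau) plus the trace identity $c_{jk}(A)a_{jk} = N\det(A)$ is required to absorb the factor $1/n$.

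For $f'$, I differentiate under the integral. Since $\dot h_s=\psi h_s$, and since
$$
\frac{d}{ds}\det Q(h_s) \;=\; c_{jk}(h_s)\,\dot q_{jk}(h_s) \;=\; c_{jk}(h_s)\,q_{jk}(\psi h_s)
$$
by the definition \eqref{c_ij} of the cofactors and Remark \ref{rem_change}, the product rule gives
$$
f'(s) = \frac{1}{n}\int_\sfe \psi h_s \det Q(h_s)\,du + \frac{1}{n}\int_\sfe h_s\, c_{jk}(h_s)\, q_{jk}(\psi h_s)\,du.
$$
Here is the place where the main idea enters. I apply the Cheng--Yau integration by parts \eqref{ibp1} to the second integrand, with $\phi=h_s$ in place of $\phi$ and $\psi h_s$ in place of $\psi$, to obtain
$$
\int_\sfe h_s\, c_{jk}(h_s)\, q_{jk}(\psi h_s)\,du \;=\; \int_\sfe \psi h_s\, c_{jk}(h_s)\, q_{jk}(h_s)\,du.
$$
By Remark \ref{remark matrices 2}, $c_{jk}(h_s)\,q_{jk}(h_s) = (n-1)\det Q(h_s)$. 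Substituting back, the two pieces combine with coefficients $\frac{1}{n}+\frac{n-1}{n}=1$ and yield \eqref{first derivative}.

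For $f''$, I differentiate \eqref{first derivative} directly under the integral, with no further integration by parts. The product rule applied to $\psi h_s \det Q(h_s)$ and the same chain rule as above give exactly \eqref{second derivative}. For $f'''$, I differentiate \eqref{second derivative} term by term. The first term contributes $\psi^3 h_s\det Q(h_s) + \psi^2 h_s c_{jk}(h_s)q_{jk}(\psi h_s)$. In the second term, I use the product rule along with
$$
\frac{d}{ds}c_{jk}(h_s) \;=\; c_{jk,rs}(h_s)\,\dot q_{rs}(h_s) \;=\; c_{jk,rs}(h_s)\,q_{rs}(\psi h_s),
$$
which follows from \eqref{c_ijkl} and Remark \ref{rem_change}, and $\dot q_{jk}(\psi h_s) = q_{jk}(\psi^2 h_s)$ (from $\frac{d}{ds}(\psi h_s)=\psi^2 h_s$ and Remark \ref{rem_change}). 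Collecting all five resulting terms and grouping the two equal contributions $\psi^2 h_s c_{jk}(h_s)q_{jk}(\psi h_s)$ gives the stated coefficient $2$ in \eqref{third derivative}.

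The main obstacle is really only in the first derivative: one must spot that \eqref{ibp1} with the particular choice $\phi=h_s$, $\psi\leftarrow \psi h_s$ converts the cofactor--Hessian pairing into a form where Remark \ref{remark matrices 2} applies and produces the factor $(n-1)$ that combines with the leading $1$ to cancel the $1/n$ prefactor. Once this is in place, the higher derivatives are a purely mechanical application of the product rule, the chain rules for $Q$, $\det$, and $C$, and the identity $\dot h_s = \psi h_s$.
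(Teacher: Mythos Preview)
Your proof is correct and follows essentially the same route as the paper: differentiate under the integral, apply the Cheng--Yau integration by parts \eqref{ibp1} together with the trace identity of Remark~\ref{remark matrices 2} to collapse the $1/n$ in $f'$, and then obtain $f''$ and $f'''$ by direct product-rule differentiation using $\dot h_s=\psi h_s$ and the chain rules for $\det$ and its cofactors. The only cosmetic difference is that the paper carries the computation in terms of $\dot h,\ddot h,\dddot h$ and substitutes \eqref{dots} at the very end, whereas you insert $\psi h_s,\psi^2 h_s,\psi^3 h_s$ from the start.
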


\begin{proof}
For brevity, we set
$$
c_{jk}(h)=c_{jk}(Q(h)).
$$
We differentiate the function $f$ in $s$, and we adopt the summation convention over repeated indices.
\begin{eqnarray*}
f'(s)&=&\frac 1n\int_\sfe\left[\dot{h}\det(Q(h))+h c_{jk}(h)\dot{q}_{jk}(h)\right]dy\\
&=&\frac 1n\int_\sfe\left[\dot{h}\det(Q(h))+h c_{jk}(h)q_{jk}(\dot{h})\right]dy\\
&=&\frac 1n\int_\sfe\left[\dot{h}\det(Q(h))+\dot{h} c_{jk}(h) q_{jk}(h)\right]dy\\
&=&\int_\sfe \dot{h}\det(Q(h))dy.
\end{eqnarray*}
Above we have used Remark \ref{rem_change} and the integration by parts formula (\ref{ibp1}).

Passing to the second derivative, we get:
\begin{eqnarray*}
f''(s)&=&\int_\sfe\left[\ddot{h}\det(Q(h))+\dot{h}c_{jk}(h)\dot{q}_{jk}(h)\right]du\\
&=&\int_\sfe\left[\ddot{h}\det(Q(h))+\dot{h}c_{jk}(h) q_{jk}(\dot{h})\right]du.
\end{eqnarray*}

Finally
\begin{eqnarray*}
f'''(s)&=&\int_{\sfe}\left[\dddot{h}\det(Q(h))+2\ddot{h}c_{jk}(h)q_{jk}(\dot{h})\right]du+\\
&&+\int_\sfe\left\{\dot{h}\left[c_{jk,rs}(h)q_{jk}(\dot{h})q_{rs}(\dot{h})+c_{jk}q_{jk}(\ddot{h})\right]\right\}du.
\end{eqnarray*}
Equalities \eqref{first derivative}, \eqref{second derivative} and \eqref{third derivative} follow from \eqref{dots}.

\end{proof}
The next Corollary has appeared in \cite{CLM}.

\begin{corollary}\label{lemma derivatives 2} In the notations introduced before we have:
\begin{equation}\label{first derivative at zero}
f'(0)=\int_{\sfe}\psi du;
\end{equation}
\begin{equation}\label{second derivative at zero}
f''(0)=\int_{\sfe}[n\psi^2-|\nabla_s\psi|^2]du.
\end{equation}
\end{corollary}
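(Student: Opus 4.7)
The plan is to substitute $s=0$ directly into the formulas \eqref{first derivative} and \eqref{second derivative} of Lemma \ref{lemma derivatives 1}, using that the linearization is especially simple because $h_0\equiv 1$.

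First I would record the base-case values. Since $h_0 \equiv 1$, the remark at the end of Section 2 gives $Q(h_0;u)=I_{n-1}$ for every $u\in\sfe$, hence $\det(Q(h_0;u))=1$. Moreover, by Remark \ref{remark matrices 1}, the cofactor matrix satisfies $c_{jk}(Q(h_0;u))=\delta_{jk}$. Also $\psi h_0=\psi$, so $q_{jk}(\psi h_s)\big|_{s=0}=\psi_{jk}+\psi\,\delta_{jk}$.

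For \eqref{first derivative at zero}, substitute $s=0$ into \eqref{first derivative}: the integrand becomes $\psi\cdot 1\cdot 1=\psi$, giving $f'(0)=\int_\sfe\psi\,du$.

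For \eqref{second derivative at zero}, substitute $s=0$ into \eqref{second derivative}. The first piece contributes $\int_\sfe\psi^2\,du$. For the second piece, $c_{jk}(h_0)\,q_{jk}(\psi h_s)\big|_{s=0}=\delta_{jk}(\psi_{jk}+\psi\,\delta_{jk})=\Delta_\sigma\psi+(n-1)\psi$, where $\Delta_\sigma$ is the spherical Laplacian (the trace of the Hessian of covariant derivatives). Multiplying by $\psi$ and adding the first piece yields
\begin{equation*}
f''(0)=\int_\sfe\bigl[\psi^2+\psi\,\Delta_\sigma\psi+(n-1)\psi^2\bigr]du=\int_\sfe\bigl[n\psi^2+\psi\,\Delta_\sigma\psi\bigr]du.
\end{equation*}
Finally, integration by parts on the closed manifold $\sfe$ gives $\int_\sfe\psi\,\Delta_\sigma\psi\,du=-\int_\sfe|\nabla_s\psi|^2\,du$, which produces \eqref{second derivative at zero}. (Alternatively, this integration by parts is the special case $h\equiv 1$ of the Cheng--Yau identity \eqref{ibp1}, with $\phi=\psi$.)

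There is no real obstacle here: the argument is a bookkeeping exercise that relies only on (i) the identity $c_{jk}(I_{n-1})=\delta_{jk}$ from Remark \ref{remark matrices 1} to collapse the cofactor term to a trace, and (ii) an integration by parts on the sphere. The mildly subtle point is not to forget the $+\psi\,\delta_{jk}$ inside $q_{jk}$, which is responsible for the additional $(n-1)\psi^2$ term that combines with $\psi^2$ to give the coefficient $n$ in \eqref{second derivative at zero}.
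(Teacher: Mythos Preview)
Your proposal is correct and follows essentially the same approach as the paper's proof: substitute $s=0$ into the formulas of Lemma~\ref{lemma derivatives 1}, use $c_{jk}(h_0)=\delta_{jk}$ to collapse the cofactor term to $\psi\Delta_\sigma\psi+(n-1)\psi^2$, and then integrate by parts (the paper phrases this last step as the divergence theorem on $\sfe$). Your write-up is in fact slightly more detailed than the paper's, since you spell out how the coefficient $n$ arises as $1+(n-1)$.
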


\begin{proof} Equality \eqref{first derivative at zero} follows immediately from \eqref{first derivative}.
Moreover, plugging $s=0$ in \eqref{second derivative}, and using the facts
$$
c_{jk}(h_0)=\delta_{jk}\quad\mbox{and}\quad
q_{jk}(\psi)=(\psi_{jk}+\psi\delta_{kj})
$$
for every $j,k=1,\dots,n-1$, we get
$$
f''(0)=\int_{\sfe}[n\psi^2+\psi\Delta_s\psi]du.
$$
By the divergence theorem on $\sfe$ we deduce \eqref{second derivative at zero}.

\end{proof}

\begin{lemma}\label{estimate of third derivative}
For every $\rho>0$ there exists $\eta>0$, such that if $\psi\in{\mathcal U}$ is an even function and it verifies:
\begin{itemize}
\item
$$
\int_{\sfe}\psi du=0;
$$
\item
$$
\|\psi\|_{C^2(\sfe)}\le\eta;
$$
\end{itemize}
then 
$$
|(\log f)'''(s)|\le\rho\|\nabla_s\psi\|^2_{L^2(\sfe)},\quad\forall\ s\in[-2,2],
$$
where $f$ is defined as in \eqref{the function} and $h_s=e^{s\psi}$.
\end{lemma}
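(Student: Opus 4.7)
The starting point is the identity
\[
(\log f)'''(s) = \frac{f'''(s)}{f(s)} - 3\,\frac{f'(s)\,f''(s)}{f(s)^2} + 2\,\frac{f'(s)^3}{f(s)^3}.
\]
The plan is to control $f(s)$ from below and each of $|f'(s)|,|f''(s)|,|f'''(s)|$ from above, with all bounds ultimately of the form $(\text{positive power of }\eta)\cdot\|\nabla_s\psi\|_{L^2(\sfe)}^\alpha$ for suitable $\alpha$. By Remark \ref{remark large neighborhood} and continuity, $f(s)\ge c(n)>0$ uniformly for $s\in[-2,2]$ and $\psi\in\mathcal U$. For $f'(s)$ one uses $\int_\sfe\psi\,du=0$ and the pointwise estimate $h_s\det Q(h_s)=1+O(\|\psi\|_{C^2(\sfe)})$ to rewrite $f'(s)=\int_\sfe\psi(h_s\det Q(h_s)-1)\,du$; Cauchy--Schwarz and the even-function Poincar\'e inequality \eqref{spherical harmonics} then give $|f'(s)|\le C(n)\,\eta\,\|\nabla_s\psi\|_{L^2(\sfe)}$.

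For $f''(s)$, starting from \eqref{second derivative}, expand $q_{jk}(\psi h_s)$ and apply the Cheng--Yau integration-by-parts identity \eqref{ibp1} (equivalently, use that the cofactor columns of $Q(h_s)$ are divergence-free) to recast $\int\psi h_s c_{jk}(h_s)(\psi h_s)_{jk}\,du$ as an integral whose integrand is dominated, up to trace corrections, by $c_{jk}(h_s)(\psi h_s)_j(\psi h_s)_k$. Since $c_{jk}(h_s)=\delta_{jk}+O(\eta)$, this yields $|f''(s)|\le C(n)\|\nabla_s\psi\|_{L^2(\sfe)}^2$, the $\psi^2$ contributions being absorbed via \eqref{spherical harmonics}; the constant $C(n)$ is independent of $\eta$.

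The main work is the bound on $f'''(s)$; each of the four terms in \eqref{third derivative} carries an explicit factor of $\psi$ out front, so the goal is to extract $\|\psi\|_{L^\infty}\le\eta$ and control the remainder by $\|\nabla_s\psi\|_{L^2(\sfe)}^2$. The first term $\int h_s\psi^3\det Q(h_s)\,du$ is immediate by Poincar\'e. For the second and fourth terms (linear in a single $q_{jk}(\cdot)$), I apply \eqref{ibp1} to shift one of the two covariant derivatives in $q_{jk}$ onto the prefactor, after which the integrand is pointwise bounded by $C(n)\eta(|\nabla_s\psi|^2+|\psi|^2)$ plus lower-order pieces. The third term, $\int h_s\psi\,c_{jk,rs}(h_s)\,q_{jk}(\psi h_s)\,q_{rs}(\psi h_s)\,du$, is the delicate one, since it is bilinear in second covariant derivatives of $\psi h_s$; here I invoke the extended Cheng--Yau lemma (the matrix $c_{jk,rs}(h_s)\,q_{jk}(\psi h_s)$ is divergence-free in the index $s$) together with the symmetry \eqref{ibp2}, integrating by parts once in $s$ to trade $(\psi h_s)_{rs}$ for $(\psi h_s)_r$ and placing a derivative on the $h_s\psi$ factor. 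Careful tracking of indices shows every summand of the resulting integrand carries a pointwise factor of $|\psi|$ or $|\nabla_s\psi|$, leading to $|f'''(s)|\le C(n)\,\eta\,\|\nabla_s\psi\|_{L^2(\sfe)}^2$.

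Combining these estimates and using the crude inequality $\|\nabla_s\psi\|_{L^2(\sfe)}\le C(n)\,\|\psi\|_{C^2(\sfe)}\le C(n)\,\eta$, each of the three contributions to $(\log f)'''$ is of the form $O(\eta)\,\|\nabla_s\psi\|_{L^2(\sfe)}^2$, so choosing $\eta=\eta(\rho,n)$ small enough closes the argument. The main obstacle is the third term of \eqref{third derivative}: because both factors $q_{jk}(\psi h_s)$ contain second covariant derivatives, disposing of them cleanly forces one to use the extended divergence-free identity and the symmetry \eqref{ibp2}, and to keep track of which index is being differentiated at each step so that no derivative is orphaned.
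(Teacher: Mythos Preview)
Your proposal is correct and follows essentially the same route as the paper: decompose $(\log f)'''$ via the standard identity, bound $f$ from below uniformly on $[-2,2]$, and estimate $|f'|$, $|f''|$, $|f'''|$ separately using the Cheng--Yau integration-by-parts formula \eqref{ibp1}, its extension \eqref{ibp2} for the bilinear term in $f'''$, and the even Poincar\'e inequality \eqref{spherical harmonics} to convert $\|\psi\|_{L^2}$ into $\|\nabla_s\psi\|_{L^2}$. The only cosmetic difference is that for $f'$ you subtract the constant $1$ and use $\int_\sfe\psi=0$ before Cauchy--Schwarz, whereas the paper simply bounds $|f'(s)|\le C\|\psi\|_{L^2(\sfe)}$ directly and then applies Poincar\'e; both yield the same $O(\eta)\|\nabla_s\psi\|_{L^2}^2$ contribution to $(\log f)'''$.
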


\begin{proof} We have 
\begin{eqnarray*}
(\log f)'''(s)=\frac{f'''(s)}{f(s)}-3\frac{f'(s)f''(s)}{f^2(s)}+2\frac{(f')^3(s)}{f^3(s)}.
\end{eqnarray*}
we first fix $\eta_1>0$ such that $\|\psi\|_{C^2(\sfe)}\le\eta_1$ implies 
$$
f(s)\ge\frac1
{4n}|\sfe|=\frac 14f(0),\quad\forall\ s\in[-2,2].
$$
Hence
$$
|(\log f)'''(s)|\le C_0(|f'''(s)|+|f'(s)f''(s)|+|(f')^3(s)|)=C_0(T_1+T_2+T_3),
$$
for some constant $C_0=C_0(n)=\frac{4n}{|\sfe|}$. Throughout this proof, we will denote by $C$ a generic positive constant dependent 
on the dimension $n$ and $\eta_1$.

\medskip 

\noindent{\bf Bound of the term $T_3$.}
There exists $C$ such that
$$
\|h_s\|_{C^2(\sfe)}=\|e^{s\psi}\|_{C^2(\sfe)}\le C,
$$
for every $s\in[-2,2]$ and for every $\psi\in{\mathcal U}$. Therefore
$$
h_s(u)\det(Q(h_s;u))\le C,\quad\forall\ \psi\in\mathcal{U}.
$$
Consequently, by Lemma \ref{lemma derivatives 1}, we may write two types of estimates
\begin{eqnarray*}\label{derest}
|f'(s)|\le C\|\psi\|_{C^2(\sfe)},\quad
|f'(s)|\le C\|\psi\|_{L_2(\sfe)}.
\end{eqnarray*}
By (\ref{spherical harmonics}), there exists $\eta'>0$ such that 
\begin{equation}\label{bound}
|(f')^3(s)|\le\frac\rho{3C_0}\|\nabla \psi\|_{L^2(\sfe)},
\end{equation}
if $\psi$ verifies $\|\psi\|^2_{C^2(\sfe)}\le \eta'$.

\medskip

\noindent{\bf Bound of the term $T_2$.} 
By Lemma \ref{lemma derivatives 1}, (\ref{spherical harmonics}) and the integration by parts formula (\ref{ibp1}), we have
\begin{eqnarray*}
|f''(s)|&\le& C\|\psi\|^2_{L^2(\sfe)}+\left|\int_{\sfe}\psi h_sc_{jk}(h_s)(\psi h_s\delta_{jk}+(\psi h_s)_{jk})du\right|\\
&\le&C\|\psi\|^2_{L^2(\sfe)}+\left|\int_{\sfe}c_{jk}(h_s)(\psi h_s)_j(\psi h_s)_{k}du\right|\\
&\le&C\|\psi\|^2_{L^2(\sfe)}+C\|\nabla_s\psi\|_{L^2(\sfe)}^2\\
&\le&C\|\nabla_s\psi\|_{L^2(\sfe)}^2
\end{eqnarray*}
(note that the first term was bounded using the argument as for the previous part of this proof). 
Hence we have the bound \eqref{bound} for $T_2$ as well.

\medskip

\noindent{\bf Bound of the term $T_1$.} Equality \eqref{third derivative} provides an expression of $f'''(s)$ as the sum of four terms.
Each of them can be treated as in the previous two cases, with the exception of $\left|\int_{\sfe}\psi h_s c_{jk,rs}(h_s)q_{rs}(\psi h_s)q_{jk}(\psi h_s)du\right|$. We estimate it as follows:
\begin{eqnarray*}
&&\left|\int_{\sfe}\psi h_s c_{jk,rs}(h_s)q_{rs}(\psi h_s)q_{jk}(\psi h_s)du\right|\\
&\le&\left|\int_{\sfe}\psi^2 h_s^2 c_{jk,rs}(h_s)q_{rs}(\psi h_s)\delta_{jk}du\right|+
\left|\int_{\sfe}\psi h_s c_{jk,rs}(h_s)q_{rs}(\psi h_s)(\psi h_s)_{jk}du\right|\\
&\le&C\|\psi\|_{C^2(\sfe)}\,\|\psi\|^2_{L_2(\sfe)}+
\left|\int_{\sfe} c_{jk,rs}(h_s)q_{rs}(\psi h_s)(\psi h_s)_{j}(\psi h_s)_{k}du\right|\\
&\le&C\|\psi\|_{C^2(\sfe)}\,\|\psi\|^2_{L_2(\sfe)}+C\|\psi\|_{C^2(\sfe)}\,\|\nabla_s \psi\|^2_{L_2(\sfe)}\\
&\le&C\|\psi\|_{C^2(\sfe)}\,\|\nabla_s \psi\|^2_{L_2(\sfe)}.
\end{eqnarray*}

We deduce that the upper bound \eqref{bound} can be established for $T_1$. This concludes the proof.

\end{proof}

\begin{lemma}\label{lemma concavity} Let $f$ be defined by \eqref{the function}. There exists $\eta>0$ such that for every even $\psi\in{\mathcal U}$ so that $\|\psi\|_{C^2(\sfe)}\le\eta$, the function $\log(f(s))$, 
is concave in $[-2,2]$. Moreover it is strictly concave in this interval unless $\psi$ is constant. 
\end{lemma}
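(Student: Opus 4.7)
The plan is to establish $(\log f)''(s)\le 0$ on $[-2,2]$ by producing a strictly negative value of $(\log f)''(0)$ on a scale comparable to $\|\nabla_s\psi\|_{L^2(\sfe)}^2$, and then transporting this bound to the whole interval via the third-derivative estimate of Lemma \ref{estimate of third derivative}.

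First I would reduce to the case $\int_\sfe\psi\,du=0$. Writing $c=\frac{1}{|\sfe|}\int_\sfe\psi\,du$ and $\tilde\psi=\psi-c$, we have $h_s=e^{sc}e^{s\tilde\psi}$, so the associated volume function $f$ factors as $f(s)=e^{nsc}\tilde f(s)$, where $\tilde f$ is the volume function attached to $\tilde\psi$. Therefore $(\log f)''=(\log\tilde f)''$, and since $\tilde\psi$ is even, has zero spherical mean and $\|\tilde\psi\|_{C^2(\sfe)}\le 2\|\psi\|_{C^2(\sfe)}$, it suffices to prove the statement for $\tilde\psi$ after halving the threshold $\eta$.

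Under the normalization $\int_\sfe\psi\,du=0$, Corollary \ref{lemma derivatives 2} together with \eqref{function in zero} gives $f(0)=|\sfe|/n$, $f'(0)=0$, and $f''(0)=\int_\sfe(n\psi^2-|\nabla_s\psi|^2)du$. Because $\psi$ is even with vanishing mean, the Poincar\'e inequality \eqref{spherical harmonics} yields $n\int_\sfe\psi^2du\le\tfrac12\int_\sfe|\nabla_s\psi|^2du$, so
\[
(\log f)''(0)=\frac{n}{|\sfe|}\int_\sfe\bigl(n\psi^2-|\nabla_s\psi|^2\bigr)du\le-\frac{n}{2|\sfe|}\|\nabla_s\psi\|_{L^2(\sfe)}^2.
\]
Setting $c_1=\frac{n}{2|\sfe|}$, I would apply Lemma \ref{estimate of third derivative} with $\rho=c_1/4$ to obtain a threshold $\eta$ (further tightened if necessary to accommodate the shift step above) for which $|(\log f)'''(t)|\le\rho\|\nabla_s\psi\|_{L^2(\sfe)}^2$ on $[-2,2]$. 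Combining these via the fundamental theorem of calculus,
\[
(\log f)''(s)=(\log f)''(0)+\int_0^s(\log f)'''(t)\,dt\le -c_1\|\nabla_s\psi\|_{L^2(\sfe)}^2+2\rho\|\nabla_s\psi\|_{L^2(\sfe)}^2=-\frac{c_1}{2}\|\nabla_s\psi\|_{L^2(\sfe)}^2\le 0,
\]
which is the desired concavity. For strict concavity, equality of $(\log f)''$ with zero at any $s\in[-2,2]$ forces $\|\nabla_s\psi\|_{L^2(\sfe)}=0$, hence $\tilde\psi$ is a constant; being mean-zero it must vanish, and so the original $\psi$ is constant.

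I do not expect a real obstacle here: the substantive analytic inputs---the $2n$ spectral gap for even functions and the $L^2$-gradient control of $(\log f)'''$---are already in hand. The only care needed is bookkeeping of the two thresholds (from the shift reduction and from Lemma \ref{estimate of third derivative}) so that the final $\eta$ satisfies both conditions simultaneously.
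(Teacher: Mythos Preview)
Your proposal is correct and follows essentially the same route as the paper: reduce to the mean-zero case (the paper does this at the end, you do it at the beginning), compute $(\log f)''(0)$ via Corollary~\ref{lemma derivatives 2} and bound it by $-c\|\nabla_s\psi\|_{L^2}^2$ using the Poincar\'e inequality~\eqref{spherical harmonics}, then carry this to all $s\in[-2,2]$ through the third-derivative estimate of Lemma~\ref{estimate of third derivative}. The only cosmetic difference is that the paper writes $(\log f)''(s)=(\log f)''(0)+s(\log f)'''(\bar s)$ via the mean value theorem, whereas you use the integral form $\int_0^s(\log f)'''(t)\,dt$; both yield the same $2\rho\|\nabla_s\psi\|_{L^2}^2$ error and the same conclusion.
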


\begin{proof}
We first assume that
\begin{equation}\label{zero mean}
\int_{\sfe} \psi du=0.
\end{equation}
For every $s\in[-2,2]$ there exists $\bar s$ between $0$ and $s$ such that
$$
\left(\log f\right)''(s)=(\log f)''(0)+s(\log f)'''(\bar s)=\frac{f(0)f''(0)-f'(0)^2}{f(0)^2}+s(\log f)'''(\bar s).
$$
It is shown in Lemma \ref{estimate of third derivative} that, for an arbitrary $\rho>0$ there exists
$\eta>0$ such that if $\|\psi\|_{C^2(\sfe)}\le\eta$ then 
$$
(\log f)'''(s) \leq \rho\|\nabla_s\psi\|^2_{L^2(\sfe)},\quad\forall\ s\in[-2,2].
$$
Using the last inequality along with Lemma \ref{lemma derivatives 2} and \eqref{zero mean} we have 
$$
\left(\log f\right)''(s)\leq\frac{1}{|\sfe|}\left[\int_{\sfe} (n\psi^2(u)-|\nabla_s \psi(u)|^2)du\right]+
\rho||\nabla_s\psi||^2_{L^2}.
$$
By \eqref{spherical harmonics} we deduce
$$
\left(\log f\right)''(s)\leq \|\nabla_s\psi\|^2_{L^2(\sfe)}\left(
2\rho-\frac1{2|\sfe|}\right),
$$
which is negative as long as
$$
\rho<\frac1{4|\sfe|},
$$
and, with this choice, strictly negative unless $\psi$ is a constant function.

Next we drop the assumption \eqref{zero mean}. For $\psi\in C^2(\sfe)$, let
$$
m_\psi=\frac1{|\sfe|}\int_{\sfe}\psi du,\quad\mbox{and}\quad\bar\psi=\psi-m_\psi.
$$
Clearly $\bar\psi\in C^2(\sfe)$ and $\bar\psi$ verifies condition \eqref{zero mean}. Moreover,
$$
\|\bar\psi\|_{C^2(\sfe)}\le\|\psi\|_{C^2(\sfe)}+|m_\psi|\le 2\|\psi\|_{C^2(\sfe)}.
$$
Consequently, $\bar\psi\in{\mathcal U}$ if $\|\psi\|_{C^2(\sfe)}\le\eta_0/2$. We also have:
$$
\bar h_s:=e^{s\bar\psi}= e^{s(\psi-m_\psi)}=e^{-sm_\psi}\ h_s.
$$
Hence 
$$
Q(\bar h_s)=e^{-sm_\psi}Q(h_s).
$$
Consider
$$
\bar f(s):=\frac 1n\int_{\sfe}\bar h_s\det(Q(\bar h_s))du=
e^{-ns m_\psi} f(s).
$$
We observe that $\log(\bar f)$ and $\log(f)$ differ by a linear term and convexity (resp. strict convexity) of $f$ is 
equivalent to convexity (resp. strict convexity) of $\bar f$. On the other hand, by the first part of this proof 
$\log(\bar f)$ is concave as long as $\|\bar\psi\|_{C^2(\sfe)}$ is sufficiently small, and this condition is verified
when, in turn, $\|\psi\|_{C^2(\sfe)}$ is sufficiently small. The proof is concluded.

\end{proof}

\section{Proofs.}

\subsection{Proof of Theorem \ref{main thm}.}

We assume $R=1$; the general case can be deduced by a scaling argument.

We first suppose that $\|h-1\|_{C^2(\sfe)}\le 1/4$. 
This implies that $h>0$ on $\sfe$ and therefore we may write $h$ in the form 
$h=e^{\psi}$, where $\psi=\log(h)\in C^2(\sfe)$.  

We select $\epsilon_0>0$ such that $\|h-1\|_{C^2(\sfe)}\le\epsilon_0$ implies $\|\psi\|_{C^(\sfe)}\le\eta_0$, {\em i.e.} 
$\psi\in{\mathcal U}$ (see Remark \ref{remark large neighborhood}). As a consequence of Proposition \ref{added}, 
$h_s=e^{s\psi}$ is the support
function of a $C^{2,+}$ convex body, for every $s\in[-2,2]$. In particular, for every $\lambda\in[0,1]$, the function $e^{\lambda\psi}$
is the support function of
$$
K^\lambda(B^n_2)^{1-\lambda}.
$$

There exists $\epsilon>0$ such that $\|h-1\|_{C^2(\sfe)}\le\epsilon$ implies $\|\psi\|_{C^2(\sfe)}\le\eta$, where $\eta>0$ is 
the quantity indicated in Lemma \ref{lemma concavity}. By the conclusion of Lemma \ref{lemma concavity}, the function $f(\lambda)=|K^\lambda(B^n_2)^{1-\lambda}|$ is log-concave, and hence (\ref{star}) follows. The equality case follows from the fact that the log-concavity of $f$ is strict unless $\psi$ is a constant function, which corresponds to the case when $K$ is a ball. 
\flushright{$\square$}
\flushleft

Below we shall sketch the proof of the corollary; we shall follow essentially the same scheme as in \cite{BLYZ}.

\subsection{Sketch of the proof of the Corollary \ref{maincor}.}

Firstly, by integrating the condition $d c_K(u)=R^{n} du$ over the sphere, we get $|K|=|R B_2^n|.$ Theorem \ref{main thm} implies (see \cite{BLYZ}): 
$$\int_{\sfe} \log \frac{R}{h_K} d c_K(u)\geq \log \frac{|R B_2^n|}{|K|}=0,$$
or, equivalently,
$$\int_{\sfe} \log R d c_K(u)\geq \int_{\sfe} \log h_K d c_K(u).$$
Using the fact that $dc_K(u)=R^n du$ once again, and then applying Theorem \ref{main thm} again, we see that the right hand side of the above is equal to
$$\int_{\sfe} \log h_K d_{R B_2^n}(u)\geq \int_{\sfe} \log R d_{R B_2^n}(u).$$
Note that the above is equal to
$$\int_{\sfe} \log R d c_K(u).$$
We have obtained a chain of inequalities starting and ending with the same expression, and hence equality must hold in all the inequalities. Therefore, $K$ is a Euclidean ball. Since, in addition, $|K|=|R B_2^n|$, we see that $K=R B_2^n,$ which finishes the proof. 
\flushright{$\square$}
\flushleft

\end{document}